\def\M {{\mathcal{M}}}
\def\N {{\mathcal{N}}}
\def\X {{\mathcal{X}}}
\def\NN{{\mathbb{N}}}
\def\RR{{\mathbb{R}}}
\def\QQ{{\mathbb{Q}}}
\def\top{\bf~Top}
\def\simp{\bf Simp}
\def\vect{\bf Vect}
\def\param{k}
\def\interdist{d_{\bowtie}}
\def\matchdist{d_{\textnormal{match}}}
\def\path{\pi}
\def\pathset{\Pi}
\def\pathdist{d_{\pathset}}
\def\setallpoints{\mathbb{P}}
\def\setadmissiblepoints{\mathcal{A}}
\def\parax{x_1}
\def\paray{x_2}
\let\svtodo\todo
\renewcommand\todo[1]{\svtodo[inline]{#1}}
\theoremstyle{plain}
\newtheorem{remark}{Remark}
\title{
Path representations in multiparameter persistent homology
}
\author{Xudong Sun}
\affiliation{\department{Institute of AI for Health}\institution{Helmholtz Munich}\country{Germany}}
\author{Rene Corbet}
\affiliation{\department{Department of Mathematics\footnote{}}\institution{KTH Royal Institute of Technology}\country{Sweden}
}
\author{Carsten Marr}
\affiliation{\department{Institute of AI for Health}\institution{Helmholtz Munich}\country{Germany}}
\begin{document}
\setlist[itemize]{noitemsep, topsep=0pt}
\begin{abstract}
Multiparameter persistence module can capture more topological differences across data instances compared to using a single parameter, where the well-studied matching distance investigates the distance along a straight line in the multiparameter space that gives the biggest difference. We propose to generalize the straight line to a monotone path filtration and offer software implementations.

%


\end{abstract}
\maketitle 
\section{Introduction}\label{sec:introduction}
Topological data analysis (TDA) has raised attention in the data science community, with applications in drug discovery~\cite{demir2022todd} and machine learning~\cite{Reininghaus2015,CarriereCuturiOudot2017, moor2020topological}.

Multiparameter persistence module captures more topological differences between data instances compared to single parameter persistence
module~\cite{botnan2022introduction}. However, the existing multiparameter persistence module is based on an approximately optimized line. If we extend the line to a path composed of line segments, it can potentially represent better topological differences between data instances. 



Our contributions are:
\begin{itemize}
    \item To our best knowledge, our work is the first to deal with distances along paths instead of straight lines in multiparameter persistence. 
    \item We also investigate Wasserstein distance besides bottleneck distance along these paths. So far, bottleneck distance has been used only for the matching distance.
    \item We provide software implementation for computing distances between data instances along a given path.
\end{itemize}

\section{Preliminaries}\label{sec:preliminaries}
\subsection{One-parameter persistence}
We give a brief introduction in this section to one-parameter~\cite{ghrist2008barcodes} and multiparameter~\cite{botnan2022introduction} persistence in the next. 

The essential ingredient for persistent homology is a~\emph{filtration}: given a totally ordered set $P^1$ (usually $\NN$ or $\RR$), 
a filtration is a functor $\X:P^1\rightarrow\top$ where $\top$ denotes the category of topological spaces 
(or $\X:P^1\rightarrow\simp$ where $\simp$ denotes the category of simplicial complexes) 
in which for each $p\leq q\in P^1$, each morphism $\X_{p,q}$ (arrow pointing from source object $X_p$ to target object $X_q$) is an injective map. 
These functors arise naturally from growing spaces, for instance in the case of a given space that gets built up gradually.   

Homology of an arbitrary dimension of a filtration yields the \emph{persistence module}. 
This is a functor $\M:P^1\rightarrow\vect_{\mathbb{K}}$ where $\vect_{\mathbb{K}}$ denotes the category of vector spaces over a fixed field $\mathbb{K}$ coinciding with the coefficients of homology.
This descriptor of the homological behavior of the filtration is free from thresholds or additional parameters. 
Furthermore, it is known to be completely classified by a complete discrete invariant, expressed as~\emph{barcode} or~\emph{persistence diagram}. 
The latter, denoted by $D$, is a collection of points in $\RR^2$. Each point 
\begin{equation}
x_{bd}=(x_b\in P^1, x_d\in P^1)\in D(\mathcal{M})\label{eq:xbd}\end{equation}
with $x_b, x_d \in P^1$ corresponds to a homological feature, its first coordinate $x_b$ represents the birth value of the feature, and the second coordinate $x_d$ represents its death value. 

Let $\eta(\cdot)$ be a matching between two persistence diagrams. Canonical distances between two persistence modules $\M,\N$ are the \emph{bottleneck distance} $d_B$, defined via
{\scriptsize
\begin{equation}
  d_B(\M,\N)\coloneqq \inf_{\eta:D(\M)\rightarrow D(\N)} \sup_{x_{bd}\in D(\M)} \lVert x_{bd} - \eta(x_{bd}) \rVert_\infty,   \label{eq:bottleneck_dist}
\end{equation}
}
and the \emph{Wasserstein distance} $d_{W,q}$, defined via
{\scriptsize
\begin{equation}
  d_{W,q}(\M,\N)\coloneqq {\left[ \inf_{\eta:D(\M)\rightarrow D(\N)} \sum_{x_{bd}\in D(\M)}\lVert x_{bd} - \eta(x_{bd}) \rVert_\infty^q \right]} ^{1/q}
\end{equation}}
with respect to a parameter $q\in[1,\infty)$. $d_B$ may informally be viewed as $d_{W,\infty}$.
These distances can be computed efficiently~\cite{kerber2016geometry}.

\subsection{Multiparameter persistence}
Persistence modules $\M:P^1\rightarrow\vect_{\mathbb{K}}$ readily generalize to the case
where the indexing category $P^{>1}$ is a partially ordered set (poset for short) and we denote $P^{>1}$ as $P$ for simplicity henceforth. 
$P$ is usually set to a direct sum consisting of summands of $\NN,\QQ_{\geq0},\RR_{\geq0}$ or their poset opposites $\NN^{op},\QQ_{\geq0}^{op},\RR_{\geq0}^{op}$. 
In cases with more than one summand, we call these functors \emph{multiparameter persistence modules}, respectively.

Analogously to the one-parameter case, multiparameter persistence modules naturally arise as homology of \emph{multifiltrations}: 
these are functors $\X:P\rightarrow\top$ (or $\X:P\rightarrow\simp$) in which each morphism $\X_{p,q}$ with $p \prec q \in P$ is an injective map. 
Multifiltrations are useful in data-analytic situations where a single filtration parameter is not sufficient to encode the structure of interest in data.
If $P$ can be written as 2 summands and can not be decomposed into more than 2 summands, we may also call the aforementioned terms \emph{bipersistence modules} and \emph{bifiltrations.}

Examples of interesting multifiltrations include superlevel-Rips multifiltrations~\cite{carlsson2009theory}, the density-sensitive multicover bifiltration~\cite{Chazal2011,Sheehy2012} and its computationally feasible equivalent, the rhomboid bifiltration~\cite{corbet2021computing}. 

When $P=\NN^k$, the classical theory of multigraded modules can be applied to persistence modules~\cite{carlsson2009theory}, and hence, the toolkit from commutative algebra is available~\cite{eisenbud1995commutative,miller2005combinatorial}. It is well-known that the decomposition theory of multiparameter persistence modules is complicated~\cite{gabriel1972unzerlegbare,carlsson2009theory} and hence does not admit a complete discrete invariant like the barcode~\cite{Ghrist2008} or the persistence diagram~\cite{Cohen-SteinerEdelsbrunnerHarer2007} as in the case of a single parameter. Therefore there is the need for various insightful invariants~\cite{lesnick2019computing,harrington2017stratifying,thomasdiss,scolamiero2017multidimensional,CCS21}, and new challenges arise when computing distances between multiparameter persistence modules~\cite{bjerkevik2019computing,kerber2019exact,gafvert2017stable}.
While this theory is intricate already, oftentimes there is the need to generalize the domains to the real numbers. To do this, the use of suitable finiteness conditions~\cite{CK18,Lesnick2015,miller2017data,scolamiero2017multidimensional} is essential. 

\subsection{Distances between multiparameter persistence modules}\label{sec:distances}
We give a brief overview of previously established distances between multiparameter persistence modules below.

\subsubsection{The interleaving distance}
The \emph{interleaving distance}~\cite{chazalinterleaving,lesnick2015theory} is a 
generalization of the bottleneck distance defined in~\Cref{eq:bottleneck_dist} on persistence diagrams, viewed on the level of algebra. 

For $\varepsilon\geq0$, we define $\M(\varepsilon)$ to be the persistence module shifted by the all-$\varepsilon$ vector. 
Formally, ${\M(\varepsilon)}_a\coloneqq {\M}_{a+\varepsilon}$ and ${\M(\varepsilon)}_{a,b}\coloneqq \M_{a+\varepsilon,b+\varepsilon}$. 
If the module is indexed by a lower-bounded set like $\RR_{\geq0}^{\param}$, we add 0-vector spaces in ${\M(\varepsilon)}_{a,b}$ whenever $a<\varepsilon$ or $b<\varepsilon$. 
$\M,\N$ are said to be \emph{$\varepsilon$-interleaved} if there exist morphisms $\M\rightarrow\N(\varepsilon)$ and $\N\rightarrow\M(\varepsilon)$ that commute with the linear transformations in $\M$ and $\N$.

Then the \emph{interleaving distance} is defined to be the infimum over all such $\varepsilon$, i.e.,
\begin{equation}
  \interdist(\M,\N) = \inf_{\epsilon}\left\{\varepsilon\geq 0\mid \text{$\M$ and $\N$ are $\varepsilon$-interleaved}\right\}.
\end{equation}
It is known to be the most discriminative stable distance~\cite{lesnick2015theory} but its computation and even its approximation is an NP-hard problem~\cite{bjerkevik2019computing}.

\subsubsection{The matching distance}\label{sec:matching_dist}

The \emph{matching distance}~\cite{cerri2013betti} is another generalization of the bottleneck distance defined in~\Cref{eq:bottleneck_dist} to multiparameter persistence. It is defined as the weighted supremum of the bottleneck distance along a straight line $\ell$ with positive slope in the parameter space, which we call \emph{slice}, and the set of all slices will be denoted by $\mathcal{L}^+$. Formally,
\begin{equation}
\matchdist(\M,\N) = \sup_{\ell\in\mathcal{L}^+}\left( w_\ell\cdot d_B(\M_\ell,\N_\ell)\right)    
\end{equation}
where $\M_\ell$ and $\N_\ell$ are defined in~\Cref{def:module_l} and $w_{\ell}$ defined in~\Cref{eq:weight_slice_line}.
\begin{definition}[$\M_\ell$]
The restrictions of the corresponding multiparameter persistence module $\M$ to the one-parameter persistence module along the slice $\ell$ is $\M_\ell$.\label{def:module_l}
\end{definition}
If the direction of $\ell$ is expressed by a vector $v=(v_1,\ldots,v_\param)$ with unit length, where $\param$ is the number of parameters, 
\begin{equation}
  w_\ell\coloneqq \min_{(1,\ldots,\param)}{|v_i|} \label{eq:weight_slice_line}
\end{equation}
Consequently, the closer the line is to one of the axes, the more the weight $w_\ell$ penalizes the corresponding bottleneck distance. 

There are efficient computational tools like \emph{box approximations} and the \emph{augmented arrangement} for a fast approximation~\cite{kerber2020efficient} and exact computation~\cite{kerber2019exact} in two parameters. 

Note that the matching distance is stable with respect to the interleaving distance~\cite{landi2014rank}, i.e., for all multiparameter persistence modules $\M,\N$ we have $\matchdist(\M,\N)\leq\interdist(\M,\N)$. The weight defined above ensures this inequality.

\begin{remark}
In bifiltrations like the multicover bifiltration, one parameter is discrete, and the other is continuous. 
Therefore, one might like to rescale the parameter space, which would correspond to an adjustment of the weights.
Hence, the weights $w_\ell$ are theoretically reasonable, 
but might not be the most suitable choice in practice. We resolve this issue in~\Cref{sec:stretching}.\label{remark:weigth_path_short_coming}
\end{remark} 

\subsubsection{Other distances}
Other distances include multiparameter versions of ${L}_p$- and Wasserstein-distances~\cite{bjerkevik2021lp,thomasdiss,bubenik2018algebraic}, distances obtained from noise systems~\cite{scolamiero2017multidimensional} and persistence contours~\cite{gafvert2017stable}, as well as distances between the hierarchical stabilization~\cite{gafvert2017stable} of classical invariants. An example of the latter is stable rank~\cite{gafvert2017stable,CCS21} which may also serve as a feature map for machine learning tasks.

Regarding the construction of feature maps, note that the metric geometries of the spaces of persistence diagrams with bottleneck distance and Wasserstein distance are known to be complicated and rich~\cite{bubenik2020embeddings,mileyko2011probability,turner2014frechet}. 
Hence, there is no hope for easier properties in the case of its generalizations to the multiparameter setting. 
In particular, one-parameter persistence modules are far away from being finitely dimensional vector spaces~\cite{bubenik2020embeddings,carriere2018metric,wagner2021nonembeddability} in the sense of metric geometry.
In order to perform machine learning tasks, several feature maps in infinitely dimensional vector spaces have been constructed, both in the case of one parameter~\cite{Reininghaus2015,KusanoFukumizuHiraoka2016,riihimaki2018generalized} and multiple parameters~\cite{corbet2019kernel,vipond2018multiparameter,carriere2020multiparameter,CCS21}.
The $\mathcal{L}^p$-distance between feature maps may serve as additional distance for multiparameter persistence modules. This could also be compared to the $\mathcal{L}^p$-distance of the Hilbert functions of persistence modules~\cite{Keller2018}.

\section{Method}\label{sec:methods}
\subsection{Stretching}\label{sec:stretching}
To resolve \Cref{remark:weigth_path_short_coming},
we need a less classical description of the matching distance. 
For that, assume that $\M$ is indexed over the real numbers.
Now, denote $\M_{w_\ell}$ to be $\M_\ell$ \emph{stretched} by a factor of $w_\ell$, 
i.e., for all $\parax\in\RR$ we define 
\begin{equation}
  \M_{w_\ell}(\parax)\coloneqq \M_{\ell}(w_\ell\cdot \parax)\label{eq:module_stretch}
\end{equation}
and for bi-parameter case (which can be extended to more than 2 parameters), 
\begin{equation}
\M_{w_\ell}(\parax,\paray)\coloneqq \M_{\ell}(w_\ell\cdot \parax, w_\ell\cdot \paray)
\end{equation}

We show in the following lemma that we can replace the weight in the definition of the matching distance~\Cref{sec:matching_dist} with stretching the persistence module:

\begin{lemma}~\label{lem:weight_translation}
Let $\M,\N$ be multiparameter persistence modules and $\ell\in\mathcal{L}^+$.
Then  $w_\ell\cdot d_B(\M_\ell,\N_\ell)=d_B(\M_{w_\ell},\N_{w_\ell})$.
\end{lemma}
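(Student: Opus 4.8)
The plan is to split the identity into two elementary facts and chain them. First, the stretched module $\M_{w_\ell}$ is merely a reparametrization of the one-parameter module $\M_\ell$, so its persistence diagram is that of $\M_\ell$ with every birth and death coordinate rescaled by $w_\ell$. Second, the bottleneck distance is positively homogeneous of degree one, i.e., $d_B(c\cdot D,\, c\cdot D') = c\, d_B(D,D')$ for every $c>0$. Specializing the second to $c = w_\ell$ and applying the first to both $\M$ and $\N$ then yields $d_B(\M_{w_\ell},\N_{w_\ell}) = w_\ell\, d_B(\M_\ell,\N_\ell)$.

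First I would fix notation: by~\Cref{def:module_l} the restrictions $\M_\ell,\N_\ell$ are one-parameter persistence modules indexed over $\RR$, so (under the tameness / pointwise finite-dimensionality hypotheses in force) they decompose into interval modules and are classified by persistence diagrams $D(\M_\ell),D(\N_\ell)$ whose points have the form in~\Cref{eq:xbd}; here $d_B(\M_\ell,\N_\ell)$ means $d_B$ applied to those diagrams as in~\Cref{eq:bottleneck_dist}. The structural step is to observe that the stretching of~\Cref{eq:module_stretch} is precomposition of $\M_\ell$ with a strictly increasing linear bijection $\phi\colon\RR\to\RR$ (oriented appropriately, replacing $\phi$ by its inverse in~\Cref{eq:module_stretch} if need be, so that the induced rescaling factor on barcodes comes out to be $w_\ell$ rather than $w_\ell^{-1}$). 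Precomposition with an order isomorphism carries an interval module $I_{[b,d)}$ to $I_{[\phi^{-1}(b),\,\phi^{-1}(d))}$, commutes with direct sums and preserves isomorphism type; applying this to the interval decomposition of $\M_\ell$ yields $D(\M_{w_\ell}) = w_\ell\cdot D(\M_\ell) \coloneqq \{(w_\ell x_b,\, w_\ell x_d)\mid (x_b,x_d)\in D(\M_\ell)\}$, and similarly $D(\N_{w_\ell}) = w_\ell\cdot D(\N_\ell)$, as multisets in $\RR^2$.

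Next I would verify the homogeneity of $d_B$ directly from~\Cref{eq:bottleneck_dist}: a (partial) matching $\eta$ between $D(\M_\ell)$ and $D(\N_\ell)$ is the same combinatorial datum as a matching between $w_\ell\cdot D(\M_\ell)$ and $w_\ell\cdot D(\N_\ell)$; one has $\lVert w_\ell x_{bd} - w_\ell\,\eta(x_{bd})\rVert_\infty = w_\ell\lVert x_{bd} - \eta(x_{bd})\rVert_\infty$; and the $\infty$-norm cost of sending an unpaired point $(x_b,x_d)$ to the diagonal, namely $(x_d-x_b)/2$, likewise scales by $w_\ell$. Since the positive scalar $w_\ell$ commutes with both the $\sup$ over points and the $\inf$ over matchings, $d_B(w_\ell\cdot D,\, w_\ell\cdot D') = w_\ell\, d_B(D,D')$. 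Combining with the previous paragraph,
\[
 d_B(\M_{w_\ell},\N_{w_\ell}) = d_B\bigl(w_\ell\cdot D(\M_\ell),\; w_\ell\cdot D(\N_\ell)\bigr) = w_\ell\, d_B(\M_\ell,\N_\ell),
\]
which is the claim; for $\param>2$ parameters the argument is verbatim, with~\Cref{eq:module_stretch} applied coordinatewise.

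There is no genuinely hard step; the one place to be careful --- the ``main obstacle'', such as it is --- is the bookkeeping in the structural step: confirming that the reparametrization in~\Cref{eq:module_stretch} scales barcode endpoints by exactly $w_\ell$ and not its reciprocal, and that the norm appearing in $d_B$ is the one that is honestly $1$-homogeneous, including on the diagonal-matching terms. Everything else is the one-parameter structure theorem together with the definition of the bottleneck distance.
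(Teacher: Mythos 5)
Your proof is correct and is essentially the paper's argument: both hinge on the fact that $D(\M_{w_\ell})$ consists of the points of $D(\M_\ell)$ with each coordinate rescaled by $w_\ell$, after which the positive scalar factors out of the $\inf$--$\sup$ in~\Cref{eq:bottleneck_dist}; you merely make explicit, via the interval decomposition and the diagonal-matching terms, what the paper asserts through its canonical bijection of matchings $\eta\leftrightarrow\tilde\eta$ and the relation $y_{bd}=w_\ell x_{bd}$. Your cautionary remark about the reciprocal is well placed: read literally, \Cref{eq:module_stretch} (namely $\M_{w_\ell}(\parax)=\M_\ell(w_\ell\cdot\parax)$) sends an interval $[b,d)$ of $\M_\ell$ to $[b/w_\ell,d/w_\ell)$, i.e.\ rescales barcode endpoints by $w_\ell^{-1}$, so the identity as stated really requires the convention $\M_{w_\ell}(\parax)=\M_\ell(\parax/w_\ell)$ (equivalently $y_{bd}=w_\ell x_{bd}$), which is the convention the paper's proof silently uses.
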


\begin{proof}
Recall that we use the notations $D(\cdot)$ for  persistence diagrams and $\eta(\cdot)$ for matchings between persistence diagrams. Following $x_{bd}$ notation in~\Cref{eq:xbd}, we define $y_{bd}$ similarly.
We get
\begin{align}
&d_B(\M_{w_\ell},\N_{w_\ell}) \nonumber\\
  =& \inf_{\eta:D(\M_{w_\ell})\rightarrow D(\N_{w_\ell})} \sup_{y_{bd}\in D(\M_{w_\ell})} \lVert y_{bd} - \eta(y_{bd}) \rVert_\infty \\
  =& \inf_{\tilde\eta:D(\M_\ell)\rightarrow D(\N_\ell)} \sup_{x_{bd}\in D(\M_{\ell})} \lVert w_\ell\cdot x_{bd} - w_\ell\cdot \tilde\eta(x_{bd}) \rVert_\infty \label{eq:switch_diagram}\\
  =& w_\ell  \inf_{\tilde\eta:D(\M_\ell)\rightarrow D(\N_\ell)} \sup_{x_{bd}\in D(\M_\ell)}  \lVert x_{bd} - \tilde\eta(x_{bd}) \rVert_\infty\\
  =& w_\ell \cdot d_B(\M_\ell,\N_\ell)
\end{align}
\Cref{eq:switch_diagram} stems from the fact that the set of all matchings 
$\eta:D(\M_{w_\ell})\rightarrow D(\N_{w_\ell})$ 
are in a canonical bijection with the set of all matchings 
$\tilde\eta:D(\M_{\ell})\rightarrow D(\N_{\ell})$.
In other words, in~\Cref{eq:switch_diagram}, $y_{bd}=w_{\ell}x_{bd}$ is due to definition of relation between $\mathcal{M}_{l}$ and $\mathcal{M}_{wl}$ in~\Cref{eq:module_stretch}, $\eta(\cdot)$ matches $y_{bd}$ in the $\mathcal{M}_{wl}$ space to its matching point from another data instance. 
After mapping $y_{bd}$'s counterpart $x_{bd}$ in the $\mathcal{M}_l$ space via $\tilde{\eta}$, we have to multiply by $w_l$ to reach $\eta(y_{bd})$. 
Note that the distance $||\cdot||$ in~\Cref{eq:switch_diagram} is still in the $\mathcal{M}_{wl}$ space, but the argument for the $\sup$ operator is in $\mathcal{M}_l$ space. Bijection is between $x_{bd}$ and $y_{bd}$, as well as $\tilde{\eta}(x_{bd})$ and $\eta(y_{bd})$.
\end{proof}

\subsection{Path distances}\label{sec:construction}
One of the reasons the matching distance is well-studied in multiparameter persistence is the fact that it is efficiently computable. 

With the above preparation in~\Cref{sec:stretching}, we generalize the approach of the matching distance by 
replacing the slices with general paths in positive direction. 
Formally, we define:

\begin{definition}[Path $\pi$ in multi-parameter persistence module]\label{def:path}
Let $P$ be a poset consisting of summands $\RR$ or $\RR^{op}$. A~\emph{path in $P$} is a piecewise linear curve carried by a finite ordered set of points $\pi:(p_0, p_1, p_2 , \ldots ,p_n)$ such that $p_0\prec p_1 \prec \cdots \prec p_n$.
We denote the collection of all paths in $P$ by $\pathset(P)$
and the collection of all paths in $P$ carried by $n$ points by $\pathset_n(P)$.
\end{definition}

We now define a persistence module along a path. For this we use the notation 
$\ell_{p,q}$ for the straight line connecting points $p$ and $q$ in Euclidean space. The notation $w_{\ell_{p,q}}$, which is used in~\Crefrange{eq:path_persistence_module}{eq:path_persistence_module_end}, then inherits from~\Cref{eq:weight_slice_line}.

Furthermore, we stretch the paths, motivated by the well-known stability guarantees in the case of slices, and its translation via the conclusion in~\Cref{lem:weight_translation}. Thus we reach~\Cref{def:path_persistence_module}.
\begin{definition}[Path persistence module $\M_\path$]\label{def:path_persistence_module}
Let $P$ be a poset consisting of summands $\RR$ or $\RR^{op}$. 
Let 
$\M,\N$ 
be multiparameter persistence modules over $P$. 
Let $\path\in\pathset(P)$ carried by $(p_0, p_1, p_2 , \ldots ,p_n)$. 
Define the \emph{path persistence module $\M_\path:\RR_{\geq0}\rightarrow \bf{Vect}_{\mathbb{K}}$ of $\M$ along $\path$} iteratively as 
{\scriptsize
\begin{equation}
  \M_\path (x) \coloneqq \M(p_i+(x-\sum_{j=0}^{i-1} w_{\ell_{p_j,p_{j+1}}} \lVert p_{j+1}-p_j \rVert)\cdot  \frac{p_{i+1}-p_i}{w_{\ell_{p_i,p_{i+1}}}\cdot \lVert p_{i+1}-p_i \rVert})\label{eq:path_persistence_module}
\end{equation}}

if there is an $i\in\{0,1,\ldots,n-1\}$ such that 
{\scriptsize\begin{equation}
\sum_{j=0}^{i-1} w_{\ell_{p_j,p_{j+1}}} \lVert p_{j+1}-p_j \rVert
\leq x 
< \sum_{j=1}^{i} w_{\ell_{p_j,p_{j+1}}} \lVert p_{j+1}-p_j\rVert,
\end{equation}}
and
{\scriptsize
\begin{equation}
\M_\path (x) \coloneqq 
\M(p_n+(x-\sum_{j=0}^{n-1} w_{\ell_{p_j,p_{j-1}}} \lVert p_{j+1}-p_j \rVert)\cdot \frac{p_{n}-p_{n-1}}{w_{\ell_{p_{n-1},p_n}}\cdot \lVert p_{n}-p_{n-1} \rVert})
\end{equation}
}

if~\Cref{eq:path_persistence_module_end} holds. 
\scriptsize{\begin{equation}
  \sum_{j=0}^{n-1} w_{\ell_{p_j,p_{j+1}}} \lVert p_{j+1}-p_j\rVert\leq x\label{eq:path_persistence_module_end}.
\end{equation}}
\end{definition}

\begin{remark}
In~\Cref{def:path_persistence_module}, we use $x$ as the \textbf{accumulated} natural coordinate along the path. Note that since each $p_i$ is multi-dimensional vector, we use here a scalar x to incorporate a parameterization of the path from $p_i$ to $p_{i+1}$, and so on. In original coordiante, this distance has to be multiplied by weight to equal the distance in the natural coordinate.
After the last waypoint, the path extends to a line.~\Cref{eq:path_persistence_module} is essentially the last cocordinate $p_i$ plus the direction times the length traveled along that direction.
\end{remark}

\begin{remark}
Note that by definition, the path persistence module starts at the first of those points carrying the corresponding path. Hence, the path should by default be carried by a collection of points such that the first point would not be greater than the degree of a generator of the multiparameter persistence module.
\end{remark}

\begin{definition}[path distance, bottleneck version]\label{def:path_bottleneck_dist}
The morphisms $\M_\pi(x,y)$ are defined to be those inherited from $\M$ with the corresponding values.

Now, we define
\begin{equation}
d_\pi^B(\M,\N) \coloneqq d_B(\M_\path,\N_\path)  
\end{equation}
and the~\emph{path distance $\pathdist$} via 
\begin{equation}
\pathdist^B(\M,\N) \coloneqq \sup_{\path\in\pathset} d^B_\path(\M,\N).
\end{equation}
If the supremum is achieved by a certain path, we call that path \emph{the best path}. 
\end{definition}

\begin{corollary}
For multiparameter persistence modules $\M,\N$, we get $\pathdist(\M,\N)\geq \matchdist(\M,\N)$.
\end{corollary}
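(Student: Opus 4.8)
The plan is to exhibit the matching distance as the path distance restricted to the sub-family of paths carried by exactly two points, and then obtain the inequality by enlarging the family to all of $\pathset(P)$. Concretely, to a slice $\ell\in\mathcal{L}^+$ with unit direction $v=(v_1,\dots,v_\param)$ (all $v_i>0$, so that $w_\ell=\min_i|v_i|$ by \Cref{eq:weight_slice_line}) I would associate the two-point path $\pi_\ell\coloneqq(p_0,p_1)$, where $p_0\in\ell$ is chosen $\preceq$ the degrees of all generators of $\M$ and $\N$ lying on $\ell$ — when $P$ has $\RR_{\geq0}$ or $\RR_{\geq0}^{op}$ summands one takes $p_0$ to be the point at which $\ell$ first enters $P$, so that $\M$ and $\N$ vanish along $\ell$ before $p_0$ — and $p_1\in\ell$ is any point with $p_0\prec p_1$. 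By the normalization discussed in the remarks after \Cref{def:path_persistence_module} this is an admissible path, and since $w_{\ell_{p_0,p_1}}$ depends only on the common direction $v$, it equals $w_\ell$.

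Next I would unwind \Cref{def:path_persistence_module} in the case $n=1$: its two branches — the segment $[p_0,p_1]$ and the terminal ray issuing from $p_1$ — glue continuously at $x=w_\ell\lVert p_1-p_0\rVert$ into the single formula $\M_{\pi_\ell}(x)=\M\!\left(p_0+(x/w_\ell)\,v\right)$ for all $x\geq0$, and likewise for $\N$. Comparing with the arc-length restriction $\M_\ell$, and invoking the zero-padding convention for lower-bounded domains to discard the portion of $\ell$ before $p_0$ (empty of homology by the choice of $p_0$), one checks that $\M_{\pi_\ell}$ is exactly the $w_\ell$-stretched restriction $\M_{w_\ell}$ of \Cref{eq:module_stretch}, and similarly $\N_{\pi_\ell}=\N_{w_\ell}$. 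Then \Cref{def:path_bottleneck_dist} together with \Cref{lem:weight_translation} yields
\[
d^B_{\pi_\ell}(\M,\N)=d_B(\M_{\pi_\ell},\N_{\pi_\ell})=d_B(\M_{w_\ell},\N_{w_\ell})=w_\ell\cdot d_B(\M_\ell,\N_\ell).
\]

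Since each $\pi_\ell$ lies in $\pathset(P)$, passing to the supremum over slices and then over all paths gives
\[
\pathdist^B(\M,\N)=\sup_{\path\in\pathset(P)}d^B_\path(\M,\N)\;\geq\;\sup_{\ell\in\mathcal{L}^+}d^B_{\pi_\ell}(\M,\N)=\sup_{\ell\in\mathcal{L}^+}\bigl(w_\ell\cdot d_B(\M_\ell,\N_\ell)\bigr)=\matchdist(\M,\N),
\]
which is the assertion. Once the per-slice identity is in place the inequality itself is purely formal, so I expect the main obstacle to be that identity: verifying that the piecewise formula of \Cref{def:path_persistence_module} for a two-point path really collapses to one affinely reparametrized slice, and that extending the path to a ray beyond $p_1$ together with the choice of $p_0$ makes the persistence diagram of $\M_{\pi_\ell}$ agree, up to the scaling factor $w_\ell$, with that of the full-line restriction $\M_\ell$ entering the definition of $\matchdist$ — including the domain technicalities when $P$ contains $\RR_{\geq0}$ or $\RR_{\geq0}^{op}$ summands.
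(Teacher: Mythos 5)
Your argument is correct and follows essentially the same route as the paper: you identify each slice with a two-point path (an element of $\pathset_2(P)\subset\pathset(P)$), show via \Cref{lem:weight_translation} that the corresponding path distance equals $w_\ell\cdot d_B(\M_\ell,\N_\ell)$, and conclude by taking suprema. The paper states this in two lines, while you additionally fill in the verification that the two-point path persistence module coincides with the stretched slice restriction, which is a useful but not divergent elaboration.
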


\begin{proof}
The set of all slices in the definition of the matching distance is a subset of $\pathset_2(P)$, which is a subset of $\pathset(P)$. Now,~\Cref{lem:weight_translation} yields the claim.
\end{proof}

\begin{remark}
Note further that persistent homology along all paths in $\pathset(P)$ particularly contains the fibered barcode~\cite{Lesnick2015}.
    The definition is similar to the coherent matching distance~\cite{cerri2019geometrical}. 
    The latter rather transports a matching of a persistence diagram along suitable paths while we simplify the idea to rather project the multifiltration to any path. 
    We have not found any exact computation or approximation of this distance but would be interested in comparing it with the path distance in future work. 
\end{remark}

\subsubsection{Wasserstein alternatives}\label{rem:wasserstein}
Analogously to using bottleneck distance in~\Cref{def:path_bottleneck_dist}, we could also use Wasserstein distance instead. While using bottleneck distance is known to enjoy theoretical guarantees such as stability, Wasserstein distances may have advantages from a data-scientific point of view: 
While bottleneck distance compares only one pair of points in the persistent diagram, Wasserstein distances give weight to all matched points in the persistence diagrams.
Furthermore, the additional Wasserstein parameter gives more choices that can be freely chosen or learned.

Hence, we define the \emph{$q$-Wasserstein path distance} completely analogous to the path distance in~\Cref{def:path_bottleneck_dist} by replacing bottleneck distance with $q$-Wasserstein distance.
\begin{definition}[path distance, Wasserstein version]\label{def:path_bottleneck_dist}
The morphisms $\M_\pi(x,y)$ are defined to be those inherited from $\M$ with the corresponding values.
Now, we define
\begin{equation}
d_\pi^W(\M,\N) \coloneqq d_W(\M_\path,\N_\path)  
\end{equation}
and the~\emph{path distance $\pathdist^W$} via 
\begin{equation}
\pathdist^W(\M,\N) \coloneqq \sup_{\path\in\pathset} d^W_\path(\M,\N).
\end{equation}
If the supremum is achieved by a certain path, we call that path \emph{the best path}. 
\end{definition}

\begin{remark}
Finding the best path according to~\Cref{def:path_bottleneck_dist} (or an approximation thereof) may also serve as a heuristic to detect regions in the parameter space in which two multiparameter persistence modules differ: 
Contrarily to the Hilbert function, we do not take the dimensions of the vector spaces in the parameter space into account but rather the behavior of persistent homology along the path. 
  Choosing bottleneck distance or Wasserstein distance helps to measure slightly different behavior.
\end{remark}

\subsection{Computing distances between point clouds}
Given a path $\path_{t}=p_0, p_1, \ldots, p_t$ where we use $t$ to indicate the variable length of the path in contrast to fixed $n$ used in~\Cref{sec:construction}.
Our method takes as two point clouds (a.k.a.~two data instances) $\mathcal{D}_1, \mathcal{D}_2$ as input, compute their multifiltrations and then computes their~\emph{path multifiltration}s. See~\Cref{algo:querydistance}. 
The path distance is defined completely analogous by the same stretching arguments as in~\Cref{def:path_bottleneck_dist}. In addition to these arguments, the entry values of the multifiltrations are orthogonally projected to the path of interest. 
We restricted to path persistence modules in~\Cref{def:path_bottleneck_dist} for the sake of simpler exposition and omit further details.

\begin{algorithm}[H] 
\caption{query\_distance~($\path_{t}=p_0, p_1, \ldots, p_t, \mathcal{D}_1, \mathcal{D}_2)$} 
\begin{algorithmic}[1]
  \REQUIRE{two datasets $\mathcal{D}_1, \mathcal{D}_2$, path $\path_{t}=p_0, p_1, \ldots, p_t$.}
  \STATE{calculate multi-filtration for  $\mathcal{D}_1, \mathcal{D}_2$.}
  \STATE{project multi-filtration long the path input $\path_{t}$.}
  \STATE{calculate persistence diagrams for the projected multi-filtration.}
  \STATE{rescale the two persistence diagrams.}
  \STATE{calculate wasserstein or bottleneck.}
  \RETURN{distance between the two persistence diagrams.}
\end{algorithmic}\label{algo:querydistance}
\end{algorithm}

\subsection{Path generation and optimization}
To construct a path $\path_{t}=p_0, p_1, \ldots, p_t$, we first sample $p_0$ from an initialization set $\mathbb{P}_0\subset P$.~e.g.~$\mathbb{P}_0$ can be strips $(0 \le x_i \le \delta_i)$ with $\delta_i$ being strip size and $i$ indexing the filtration parameters $x$. Based on a partially constructed path with end point $p_{t-1}$, we select next feasible point according to the returned feasible sets in~\Cref{algo:findadmissiblepoints} until the path reached maximum length $T$.



\begin{algorithm}[H] 
\caption{Find\_Next\_Step\_Admissible\_Points~${\setadmissiblepoints}({\path}_{t-1}, {\setallpoints}), \delta, n$}
\begin{algorithmic}[1]
  \REQUIRE{max path length $T$;\\
  $\setallpoints\subset P$ (set of all considered points in filtration parameters space);\\
  current constructed path $\path_{t-1}=p_0,p_1, \ldots, p_{t-1}$;\\
Strip size $\delta_{i}$ for the $ith$ coordinate (filtration parameter);\\
Number of steps to look ahead $n_i$ for each filtration parameter;\\
Let $\max_i(\setallpoints)=\max_i\{x_i(p), \forall p \in \setallpoints\}$;}
\IF{t-1=T or $x_i(p_{t-1})+\delta_i>\max_i(\setallpoints)$}
  \RETURN{$\emptyset$ (no addmissible points to append to $\pi_{t-1}$)}
        \ENDIF{}
        \STATE{$\mathcal{A}=\emptyset$}
        \FOR{$p\in \mathbb{P}:x_i(p_{t-1})<x_i(p)<x_i(p_{t-1})+k_i\delta_{i}$ with $k_i=1, \ldots, n_i$, for each filtration parameter indexed via $i$}
        \STATE{$\mathcal{A}=\mathcal{A}\cup \{p\}$}
        \ENDFOR{}
        \RETURN{$\mathcal{A}$}
	\end{algorithmic}\label{algo:findadmissiblepoints}
\end{algorithm}

%
%

In order to find optimized path to maximally distinguish between two point clouds (data instances), one straightforward way can be sampling an ensemble of paths via sampling sequencially from~\Cref{algo:findadmissiblepoints} and take the best path. In addition, one could utiize already exploited terrains using methods like reinforcement learning which we present details in~\Cref{sec:rl}.

\subsection{Implementation details}\label{sec:experiments}
We implemented the aforementioned algorithms in \textit{CPP} and \textit{Python}. Our implementation takes as input formats bifiltrations and biboundary matrices. The latter is an output of the implementation~\textsc{Rhomboidtiling}\footnote{https://github.com/geoo89/rhomboidtiling} that constructs the multicover bifiltration. Our software uses existing state-of-the-art software in TDA, namely 
\textsc{CGAL-4.9}~\footnote{Computational Geometry Algorithms Library, https://www.cgal.org},
\textsc{Hera}~\footnote{https://github.com/anigmetov/hera},
\textsc{Mpfree}~\footnote{https://bitbucket.org/mkerber/mpfree},
\textsc{Phat}~\footnote{https://github.com/blazs/phat},
\textsc{Rivet}~\footnote{https://github.com/rivetTDA/rivet}, and
\textsc{Rivet-Python}~\footnote{https://github.com/rivetTDA/rivet-python}. 
These modules are connected in our implmentation ~\footnote{\url{https://github.com/smilesun/multi_parameter_persistence_homology_path_learning}} with CPP, python and pipeline codes and scripts.

\section{Conclusion}\label{sec:conclusion}
We gave the first computationally feasible construction of calculating distances for multiparameter persistence modules along paths rather than straightlines. The construction and the implementation rely on state-of-the art software and our own code in CPP.


For future work, it will be interesting to investigate the resolution of the proposed method showing discrepancies between two point clouds (data instances) in comparision to straight-line multiparameter persistence modules and single-parameter persistence modules. In addition, this can be extended to compare two distribution of point clouds.

%
%

\section*{Acknowledgments.} 
We thank Michael Kerber and Michael Lesnick for helpful discussions on some experimental technicalities.

\printbibliography

\section*{Appendix}
\appendix
\section{Approximate path optimization with reinforcment leanring}\label{sec:rl}
In~\Cref{algo:constructpathrl}, we use reinforcement learning~\cite{sutton1999reinforcement,sun2020reinbo} to tackle the exploration and exploitation trade-off when traversing a path along the persistence homology module, we implemented a similar reinforcement learning algorithm as in~\cite{sun2020reinbo}.

\begin{algorithm}[H] 
	\caption{Construct\_PATH\_RL$(\setallpoints, \setadmissiblepoints, \mathcal{D}_1, \mathcal{D}_2), Q^{(init)}(\cdot, \cdot )$} 
	\begin{algorithmic}[1]
		\REQUIRE maximum length $T$ of a path; 
                Q table $Q(\cdot, \cdot )$ 
        \STATE initialize $S_0=p_0, t_0=1$ 
        \FOR{$t$ in $t_0+1:T$}
      \IF {$\setadmissiblepoints(S_{t-1}, \setallpoints) \neq \emptyset$ (admissible set for next points from~\Cref{algo:findadmissiblepoints})} 
            \IF{$\epsilon \sim Uniform(0,1) < 0.9$ }
	            \STATE $a = \arg\max_{p} Q(S_{t-1}, p)$~s.t. ~$p\in \setadmissiblepoints(S_{t-1}, \setallpoints)$ 
            \ELSE 
                \STATE $a \sim Uniform(\setadmissiblepoints(S_{t-1}, \setallpoints))$ (explore random next points from the set of admissible points)
            \ENDIF
            \STATE $p_t=a, S_{t}=S_{t-1}+p_t$~(string concatenation)  
            \STATE $r$ = query\_distance($\pi=S_t=p_0, \ldots, p_t$, $\mathcal{D}_1, \mathcal{D}_2$) (get reward of current decision from~\Cref{algo:querydistance})
            \STATE Q = update\_Q~($S_{t}, S_{t-1}, a, r$)~(see~\cite{sun2020reinbo})
        \ELSE
            \STATE $S_t=S_{t-1}$
            \STATE break
        \ENDIF
    \ENDFOR
	\RETURN{} $\pi=S_t=p_0, \ldots, p_t$
	\end{algorithmic}\label{algo:constructpathrl}
\end{algorithm}



\end{document}